\newcommand\cyr{%
 \renewcommand\rmdefault{wncyr}%
 \renewcommand\sfdefault{wncyss}%
 \renewcommand\encodingdefault{OT2}%
\normalfont\selectfont} \DeclareTextFontCommand{\textcyr}{\cyr}
\newtheorem{theorem}{Theorem}
\newtheorem{lemma}[theorem]{Lemma}
\newtheorem{corollary}[theorem]{Corollary}
\def\Z{\mathbb Z}
\def\Q{\mathbb Q}
\def\F{\mathbb F}
\def\Ker{\operatorname{Ker}}
\def\End{\operatorname{End}}
\def\Gal{\operatorname{Gal}}
\def\mod{\operatorname{mod}}
\def\car{\operatorname{char}}
\def\gcd{\operatorname{gcd}}
\def\GL{\operatorname{GL}}
\def\PGL{\operatorname{PGL}}
\def\sep{\operatorname{sep}}
\def\ds{\displaystyle}
\def\Quot{\operatorname{Quot}}
\def\Scal{\operatorname{Scal}}
\begin{document}

\title{
Degree bounds for projective division fields 
associated to elliptic modules with a trivial endomorphism ring
}

\date{February 19,  2020}

\author{
Alina Carmen Cojocaru and Nathan Jones
}
\address[Alina Carmen  Cojocaru]{
\begin{itemize}
\item[-]
Department of Mathematics, Statistics and Computer Science, University of Illinois at Chicago, 851 S Morgan St, 322
SEO, Chicago, 60607, IL, USA;
\item[-]
Institute of Mathematics  ``Simion Stoilow'' of the Romanian Academy, 21 Calea Grivitei St, Bucharest, 010702,
Sector 1, Romania
\end{itemize}
} \email[Alina Carmen  Cojocaru]{cojocaru@uic.edu}

\address[Nathan Jones]{
\begin{itemize}
\item[-]
Department of Mathematics, Statistics and Computer Science, University of Illinois at Chicago, 851 S Morgan St, 322
SEO, Chicago, 60607, IL, USA;
\end{itemize}
} \email[Nathan Jones]{ncjones@uic.edu}

\renewcommand{\thefootnote}{\fnsymbol{footnote}}
\footnotetext{\emph{Key words and phrases:} Elliptic curves, Drinfeld modules, division fields, Galois representations}
\renewcommand{\thefootnote}{\arabic{footnote}}

\renewcommand{\thefootnote}{\fnsymbol{footnote}}
\footnotetext{\emph{2010 Mathematics Subject Classification:} Primary 11G05, 11G09, 11F80}
\renewcommand{\thefootnote}{\arabic{footnote}}

\thanks{A.C.C. was partially supported  by  a Collaboration Grant for Mathematicians from the Simons Foundation  
under Award No. 318454.}

\begin{abstract}
Let $k$ be a global field,
let $A$ be a Dedekind domain with $\Quot(A) = k$,
and
let  $K$ be a finitely generated field.
Using a unified approach for both elliptic curves and Drinfeld modules $M$ defined over $K$ and
having a trivial endomorphism ring, 
with $k= \Q$, $A = \Z$ in the former  case 
and $k$ a global function field, $A$ its ring of functions regular away from a fixed prime  in the latter case,
for any nonzero ideal $\mathfrak{a} \lhd A$
we prove best possible estimates 
in the norm $|\mathfrak{a}|$ for the degrees over $K$ of the  subfields of the $\mathfrak{a}$-division fields of $M$
fixed by scalars.
\end{abstract}

\maketitle


\section{Introduction}

In the theory of elliptic modules -- elliptic curves and Drinfeld modules -- division fields play a fundamental
role; their algebraic properties (e.g., ramification, degree, and Galois group structure) are intimately related
to properties of Galois representations and are  essential to global and local questions about  elliptic modules themselves. Among the subfields of the division fields of an elliptic module, those fixed by the scalars are of special
significance. For example, as highlighted in \cite[Chapter 5]{Ad01} and \cite[Section 3]{CoDu04},
 in the case of an elliptic curve $E$ defined over $\Q$ and a positive integer $a$, 
 the subfield $J_a$ of the $a$-division field $\Q(E[a])$ fixed by the scalars of $\Gal(\Q(E[a]/\Q)) \leq \GL_2(\Z/a \Z)$ is closely related to the modular curve 
$X_0(a)$ which parametrizes cyclic isogenies of degree $a$ between elliptic curves;
indeed, $J_a$ may be interpreted as the splitting field of the modular polynomial $\Phi_a(X, j(E))$ (see \cite[Section 69]{We08} and \cite[Section 11.C]{Co89} for the properties of the modular polynomials $\Phi_a(X, Y)$). 
The arithmetic properties of the family of fields $(J_a)_{a \geq 1}$  are closely related 
to properties of  the reductions  $E(\mod p)$ of $E$ modulo primes $p$, 
including
the growth of the order of the Tate-Shafarevich group of  the curve $E(\mod p)$ when  viewed as constant over its own function field 
(see \cite{CoDu04})
and
the growth of the absolute discriminant of the endomorphism ring of the curve $E(\mod p)$ when viewed over the finite field $\F_p$ 
(see \cite{CoFi19}).
An essential ingredient when deriving properties about $E(\mod p)$ from the fields $J_a$ is the growth of the degrees
$[J_a :\Q]$. The goal of this article is to prove best possible estimates in $a$ for the degrees of such fields in the unified setting of elliptic curves and Drinfeld modules with a trivial endomorphism ring.

To state our main result, we proceed as in \cite{Br10} and fix:
$k$ a global field,
$A$ a Dedekind domain with $\Quot(A) = k$,
$K$ a finitely generated field, 
and
$M$ a $(G_K, A)$-module of rank $r \geq 2$,
where $G_K = \Gal(K^{\sep}/K)$ denotes the absolute Galois group of $K$.
Specifically,  
$M$ is an $A$-module 
with a continuous $G_K$-action that commutes with the $A$-action
and 
with the property that, for any ideal $0 \neq \mathfrak{a} \lhd A$, the $\mathfrak{a}$-division submodule 
$$
M[\mathfrak{a}] := \left\{x \in M: \alpha x = 0  \ \forall \alpha \in \mathfrak{a}\right\}
$$ 
has $A$-module structure
$$
M[\mathfrak{a}] \simeq_A (A/\mathfrak{a})^r.
$$
The $(G_K, A)$-module structure on $M$  gives rise to a compatible system of Galois representations 
 $\rho_{\mathfrak{a}}: G_K  \longrightarrow \GL_r(A/\mathfrak{a})$
and to a continuous representation
$$
\rho : G_K \longrightarrow \GL_r(\hat{A}),
$$
where $\hat{A} :=  \ds\lim_{ \leftarrow \atop{\mathfrak{a} \lhd A}} A/\mathfrak{a}$. 
Associated to these representations we have the $\mathfrak{a}$-division fields
$K_{\mathfrak{a}} :=  (K^{\sep})^{\Ker \rho_{\mathfrak{a}}}$,
for which we distinguish the subfields
$J_{\mathfrak{a}}$
fixed by the scalars
$\{\lambda I_r : \lambda \in (A/\mathfrak{a})^\times \}  \cap \Gal(K_{\mathfrak{a}}/K)\}$ 
(with $\Gal(K_{\mathfrak{a}}/K)$ viewed as a subgroup of $\GL_r(A/\mathfrak{a})$).

Denoting by
$\hat{\rho}_{\mathfrak{a}}: G_K \longrightarrow \PGL_r(A/\mathfrak{a})$ 
 the composition of the representation
 $\rho_{\mathfrak{a}}$
with the canonical projection $\GL_r(A/\mathfrak{a}) \longrightarrow \PGL_r(A/\mathfrak{a})$,
 we observe that
 $J_{\mathfrak{a}} = (K^{\sep})^{\Ker \hat{\rho}_{\mathfrak{a}}}$
 and we deduce that $[J_{\mathfrak{a}} :K]  \leq  \left|\PGL_r(A/\mathfrak{a})\right|$. Our main result provides a lower bound
 for $[J_{\mathfrak{a}} :K]$ of the same order of growth as  $\left|\PGL_r(A/\mathfrak{a})\right|$, as follows:

\begin{theorem}\label{Thm1}
We keep the above setting and assume that
\begin{equation}\label{assumption}
\left|\GL_r(\hat{A}) : \rho(G_K)\right| < \infty.
\end{equation}
Then, for any ideal $0 \neq \mathfrak{a} \lhd A$,
\begin{equation}\label{final-bound}
|\mathfrak{a}|^{r^2 - 1}
\ll_{M, K}
[J_{\mathfrak{a}} :K] 
\leq 
|\mathfrak{a}|^{r^2 - 1},
\end{equation}
where $|\mathfrak{a}| := |A/\mathfrak{a}|$.
\end{theorem}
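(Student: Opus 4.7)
My plan is to reduce \eqref{final-bound} to two ingredients: an explicit order count for $\PGL_r(A/\mathfrak{a})$, and the observation that the open-image hypothesis \eqref{assumption} forces $\hat{\rho}_{\mathfrak{a}}(G_K)$ to have index in $\PGL_r(A/\mathfrak{a})$ bounded independently of $\mathfrak{a}$. The upper bound is essentially tautological: since $J_{\mathfrak{a}} = (K^{\sep})^{\Ker \hat{\rho}_{\mathfrak{a}}}$ we have $[J_{\mathfrak{a}}:K] = |\hat{\rho}_{\mathfrak{a}}(G_K)| \leq |\PGL_r(A/\mathfrak{a})|$, and the count below will show that this does not exceed $|\mathfrak{a}|^{r^2-1}$.

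For the counting step I would work locally, via the Chinese Remainder decomposition $A/\mathfrak{a} \simeq \prod_i A/\mathfrak{l}_i^{e_i}$. For a single prime power $\mathfrak{l}^e$ with residue field of size $q_{\mathfrak{l}}$, the reduction map $\GL_r(A/\mathfrak{l}^e) \twoheadrightarrow \GL_r(A/\mathfrak{l})$ has kernel of order $q_{\mathfrak{l}}^{(e-1)r^2}$; combining this with $|\GL_r(\F_{q_{\mathfrak{l}}})| = q_{\mathfrak{l}}^{r(r-1)/2}\prod_{i=1}^{r}(q_{\mathfrak{l}}^i - 1)$ and dividing by $|(A/\mathfrak{l}^e)^\times| = (q_{\mathfrak{l}}-1)q_{\mathfrak{l}}^{e-1}$ yields
\[
|\PGL_r(A/\mathfrak{l}^e)| = q_{\mathfrak{l}}^{e(r^2-1)}\prod_{i=2}^{r}\left(1 - q_{\mathfrak{l}}^{-i}\right).
\]
Multiplying over $\mathfrak{l} \mid \mathfrak{a}$,
\[
|\PGL_r(A/\mathfrak{a})| = |\mathfrak{a}|^{r^2-1}\prod_{\mathfrak{l} \mid \mathfrak{a}}\prod_{i=2}^{r}\left(1 - q_{\mathfrak{l}}^{-i}\right),
\]
which is sandwiched between $|\mathfrak{a}|^{r^2-1}$ (giving the upper bound) and a positive constant times $|\mathfrak{a}|^{r^2-1}$. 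The latter uses convergence of the absolute Euler product $\prod_{\mathfrak{l}}(1 - q_{\mathfrak{l}}^{-i})$ for $i \geq 2$, which holds in both settings because $\sum_{\mathfrak{l}} q_{\mathfrak{l}}^{-i}$ converges.

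For the lower bound, set $c := [\GL_r(\hat{A}) : \rho(G_K)]$, finite by \eqref{assumption} and depending only on $M$ and $K$. The composition $\GL_r(\hat{A}) \twoheadrightarrow \PGL_r(A/\mathfrak{a})$ carries $\rho(G_K)$ onto $\hat{\rho}_{\mathfrak{a}}(G_K)$, and for any surjection $f \colon G \twoheadrightarrow H$ and subgroup $S \leq G$ one has $[H : f(S)] \leq [G : S]$. Therefore $[\PGL_r(A/\mathfrak{a}) : \hat{\rho}_{\mathfrak{a}}(G_K)] \leq c$, and combining with the order computation,
\[
[J_{\mathfrak{a}}:K] = |\hat{\rho}_{\mathfrak{a}}(G_K)| \geq \frac{|\PGL_r(A/\mathfrak{a})|}{c} \gg_{M,K} |\mathfrak{a}|^{r^2-1}.
\]

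I do not anticipate a serious obstacle: the substantive content has been packaged into hypothesis \eqref{assumption}, which in practice is a deep open-image result (Serre in the elliptic case, Pink--R\"utsche for Drinfeld modules with trivial endomorphism ring), and what remains is careful accounting of orders and indices. The only mildly delicate point is securing the lower estimate on $|\PGL_r(A/\mathfrak{a})|/|\mathfrak{a}|^{r^2-1}$ uniformly in $\mathfrak{a}$, which is precisely why one passes through the convergent global product $\prod_{\mathfrak{l}}(1 - q_{\mathfrak{l}}^{-i})$ rather than a naive prime-by-prime estimate.
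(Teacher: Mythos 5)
Your proof is correct, and it takes a genuinely different and considerably more economical route than the paper's. Your key observation is the elementary index inequality $[H : f(S)] \leq [G : S]$ for any surjective homomorphism $f \colon G \twoheadrightarrow H$ and any subgroup $S \leq G$ (proof: $g S \mapsto f(g) f(S)$ is a well-defined surjection on coset spaces, or equivalently $[H : f(S)] = [G : S \Ker f]$ divides $[G : S]$). Applied to the surjection $\GL_r(\hat{A}) \twoheadrightarrow \PGL_r(A/\mathfrak{a})$ and $S = \rho(G_K)$, this gives $\bigl[\PGL_r(A/\mathfrak{a}) : \hat{\rho}_{\mathfrak{a}}(G_K)\bigr] \leq c := \bigl[\GL_r(\hat{A}) : \rho(G_K)\bigr]$, with $c$ independent of $\mathfrak{a}$, and the lower bound drops out immediately once one knows $|\PGL_r(A/\mathfrak{a})| \gg_r |\mathfrak{a}|^{r^2-1}$, which you handle correctly via the convergent Euler product $\prod_{\mathfrak{l}}\prod_{i=2}^r(1-q_{\mathfrak{l}}^{-i})$.

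The paper instead introduces the conductor $\mathfrak{m}$ of the adelic image, factors $\mathfrak{a} = \mathfrak{a}_1 \mathfrak{a}_2$ with $\mathfrak{a}_1 \mid \mathfrak{m}^{\infty}$ and $\gcd(\mathfrak{a}_2,\mathfrak{m})=1$, further splits $\mathfrak{a}_1 = \mathfrak{a}_{1,1}\mathfrak{a}_{1,2}$ according to whether the exponent exceeds that of $\mathfrak{m}$, and applies Goursat's Lemma together with a bespoke variation (Lemma \ref{goursat-variation}) to track how the scalar subgroup of $G(\mathfrak{a}_1)$ splits across $\mathfrak{a}_{1,1}$ and $\mathfrak{a}_{1,2}$. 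The payoff of that extra machinery is the sharper intermediate asymptotic $[J_{\mathfrak{a}} : K] \asymp_{M,K} |\mathfrak{a}_{1,1}|^{r^2-1}\,\bigl|\PGL_r(A/\mathfrak{a}_2)\bigr|$, which identifies precisely which parts of $\mathfrak{a}$ contribute and could be mined for explicit constants; your argument only produces the one-sided $\gg_{M,K}$ bound and tells you nothing finer than ``the index in $\PGL_r(A/\mathfrak{a})$ is at most $c$.'' For the theorem as literally stated, however, that is all that is required, and your argument establishes it cleanly while bypassing Goursat entirely.
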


By specializing the above general setting to  elliptic curves and to Drinfeld modules, we obtain:
\begin{corollary}\label{Cor2}
Let $K$ be a finitely generated field with $\car K = 0$
and let $E/K$ be an elliptic curve over $K$ with
$\End_{\overline{K}}(E) \simeq \Z$.
Then, for any integer $a \geq 1$, the degree $[J_a : K]$ of
the subfield $J_a$ of the $a$-division field $K_a := K(E[a])$ fixed by the scalars of $\Gal(K(E[a])/K)$ satisfies
\begin{equation}\label{final-bound-ec}
a^3
\ll_{E, K}
[J_a :K] 
\leq 
a^3.
\end{equation}
\end{corollary}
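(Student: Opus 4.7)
The plan is to deduce Corollary \ref{Cor2} as a direct specialization of Theorem \ref{Thm1}. I take $k = \Q$, $A = \Z$ (so $\hat{A} = \hat{\Z}$, and $|\mathfrak{a}| = a$ whenever $\mathfrak{a} = a\Z$), and let $M := E_{\tors}(K^{\sep})$ equipped with its natural continuous $G_K$-action, which commutes with multiplication by $\Z$. Since $E[a] \simeq (\Z/a\Z)^2$ as an abelian group for every positive integer $a$, this $M$ is a $(G_K, \Z)$-module of rank $r = 2$; in particular, the exponent $r^2 - 1$ appearing in Theorem \ref{Thm1} equals $3$, matching the exponent in \eqref{final-bound-ec}. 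Under these identifications, the $\mathfrak{a}$-division field $K_{\mathfrak{a}}$ of $M$ coincides with $K(E[a])$, and the subfield $J_{\mathfrak{a}}$ fixed by scalars coincides with the $J_a$ of the statement.

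The only nontrivial step is to verify the hypothesis \eqref{assumption} in this setting, namely that the image of the adelic Galois representation
$$
\rho : G_K \longrightarrow \GL_2(\hat{\Z})
$$
attached to $E$ has finite index in $\GL_2(\hat{\Z})$. When $K$ is a number field, this is precisely Serre's celebrated open image theorem for elliptic curves without complex multiplication. For a general finitely generated field $K$ of characteristic zero satisfying $\End_{\overline{K}}(E) \simeq \Z$, the same finite-index conclusion holds; it can be obtained either by invoking the extension of Serre's theorem to finitely generated fields of characteristic zero, or by a Hilbert irreducibility and specialization argument that descends to a number subfield $K_0 \subset K$ over which $E$ admits a model still satisfying $\End_{\overline{K_0}}(E) \simeq \Z$, applies the classical open image theorem there, and then transfers the finite-index property back up to $K$ using that $G_K$ surjects onto an open subgroup of the adelic image attached to the specialization. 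Locating and quoting a clean form of this open image statement valid for finitely generated $K$ is the main (and essentially only) obstacle of the proof.

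Once \eqref{assumption} is verified, an application of Theorem \ref{Thm1} to the $(G_K, \Z)$-module $M$ and the ideal $\mathfrak{a} = a\Z$ yields the two-sided estimate $a^3 \ll_{E, K} [J_a : K] \leq a^3$, which is exactly \eqref{final-bound-ec} and completes the proof.
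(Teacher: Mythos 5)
Your proposal is correct and follows essentially the same route as the paper: specialize Theorem~\ref{Thm1} to $k=\Q$, $A=\Z$, $r=2$, identify $M$ with the torsion of $E$, and verify assumption~\eqref{assumption} via Serre's open image theorem extended from number fields to finitely generated fields of characteristic zero. The only difference is that where you describe the Hilbert-irreducibility/specialization argument for the extension in outline and flag locating a clean reference as the remaining obstacle, the paper simply cites \cite[Th\'eor\`eme 3]{Se72} together with \cite[Theorem 3.2]{Br10}, the latter being exactly the statement you were looking for.
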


\begin{corollary}\label{Cor3}
Let $k$ be a global function field, 
let $\infty$ be a fixed place of $k$,
let $A$ be the ring of elements of $k$ regular away from $\infty$,
let $K$ be a finitely generated $A$-field with 
$A$-$\car K = 0$ 
(i.e. $k \subseteq K$),
and 
let $\psi: A \longrightarrow K\{\tau\}$ be a Drinfeld $A$-module over $K$ of rank $r \geq 2$
with $\End_{\overline{K}}(\psi) \simeq A$.
Then, for any ideal $0 \neq \mathfrak{a} \lhd A$, 
the degree $[J_{\mathfrak{a}} : K]$ of the subfield $J_{\mathfrak{a}}$ of the $\mathfrak{a}$-division field 
$K_{\mathfrak{a}} := K(\psi[\mathfrak{a}])$ fixed by the scalars of 
$\Gal(K(\psi[\mathfrak{a}])/K)$
satisfies
\begin{equation}\label{final-bound-dm}
|\mathfrak{a}|^{r^2 - 1}
\ll_{\psi, K}
[J_{\mathfrak{a}} :K] 
\leq 
|\mathfrak{a}|^{r^2 - 1}.
\end{equation}
\end{corollary}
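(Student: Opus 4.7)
The strategy is to deduce Corollary~\ref{Cor3} from Theorem~\ref{Thm1} by realizing the torsion of $\psi$ as a $(G_K, A)$-module of rank $r$ and by verifying the adelic openness hypothesis (\ref{assumption}).

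First, I would fit $\psi$ into the general framework by setting $M := \bigcup_{0 \neq \mathfrak{a} \lhd A} \psi[\mathfrak{a}]$, where $\psi[\mathfrak{a}]$ denotes the $\mathfrak{a}$-torsion in $\overline{K}$ under the Drinfeld action. Under the standing assumption that $K$ has $A$-characteristic zero, the polynomial $\psi_\alpha$ is separable of $\tau$-degree $r \deg \alpha$ for every nonzero $\alpha \in A$; counting roots and invoking the structure theorem for modules over $A/\mathfrak{a}$ yields the required isomorphism $\psi[\mathfrak{a}] \simeq_A (A/\mathfrak{a})^r$. Because the $A$- and $G_K$-actions on $M$ commute by construction, $M$ becomes a $(G_K, A)$-module of rank $r$ in the sense of the paper, with associated representations $\rho_{\mathfrak{a}}$ and $\rho$ coinciding with those classically attached to $\psi$.

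Next, I would verify (\ref{assumption}), namely that $\rho(G_K)$ has finite index in $\GL_r(\hat{A})$. For a Drinfeld $A$-module of rank $r \geq 2$ in generic characteristic, defined over a finitely generated $A$-field $K$ and having $\End_{\overline{K}}(\psi) \simeq A$, this is precisely the adelic openness theorem of Pink and R\"utsche: one first establishes mod-$\mathfrak{l}$ surjectivity (hence $\mathfrak{l}$-adic openness) for almost all primes $\mathfrak{l}$ of $A$ via Pink's Tate-type theorem for Drinfeld modules, and then combines these local statements into a global adelic one along the lines of Serre's strategy for elliptic curves; finite index and openness are equivalent here since $\GL_r(\hat{A})$ is profinite. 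Once this hypothesis is in place, Theorem~\ref{Thm1} applies directly and delivers (\ref{final-bound-dm}), since the field $J_{\mathfrak{a}}$ defined in Corollary~\ref{Cor3} is exactly the fixed field $(K^{\sep})^{\Ker \hat{\rho}_{\mathfrak{a}}}$ from the general setup. The main obstacle in this plan is entirely concentrated in verifying (\ref{assumption}): the Pink--R\"utsche theorem is a deep input playing the role that Serre's open image theorem plays in the elliptic-curve Corollary~\ref{Cor2}, while everything else is formal.
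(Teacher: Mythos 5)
Your proposal matches the paper's own proof: both specialize Theorem~\ref{Thm1} to $M = \psi$ and invoke the Pink--R\"utsche open image theorem to verify hypothesis~(\ref{assumption}). The paper's argument is a one-line citation; your additional remarks (constructing $M$ as the union of the $\psi[\mathfrak{a}]$, using separability in generic characteristic to get $\psi[\mathfrak{a}] \simeq_A (A/\mathfrak{a})^r$) simply make explicit what the paper leaves implicit in calling $\psi$ a rank-$r$ $(G_K,A)$-module.
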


The proof of Theorem \ref{Thm1} relies 
on consequences of assumption (\ref{assumption}), 
on applications of Goursat's Lemma, 
as well as 
on vertical growth estimates for open subgroups of $\GL_r$. 
Specializing to elliptic curves and to Drinfeld modules,
assumption  (\ref{assumption}) is essentially Serre's Open Image Theorem \cite{Se72} 
and, respectively, Pink-R\"{u}tsche's Open Image Theorem \cite{PiRu09}. 
Variations of these open image theorems also hold for elliptic curves and Drinfeld modules with nontrivial endomorphism rings.
While these complementary cases are treated unitarily in \cite{Br10} when investigating the growth of  torsion, 
when investigating the growth of $[J_{\mathfrak{a}}:K]$ 
they face particularities whose treatment we relegate to future work.

We emphasize that the upper bound in Theorem \ref{Thm1} always holds and does not necessitate assumption  (\ref{assumption}). In contrast, the lower bound  in Theorem \ref{Thm1} is intimately related to assumption (\ref{assumption}).
Indeed, one consequence of (\ref{assumption}) is that there exists an ideal 
${\mathfrak{a}}(M, K) \unlhd A$,
which (a priori) depends on $M$ and $K$ and which has the property that, for any prime ideal $\mathfrak{l} \nmid \mathfrak{a}(M, K)$,
$\Gal(J_{\mathfrak{l}}/K) \simeq \PGL_r(A/\mathfrak{l})$. Then, for such an ideal $\mathfrak{l}$,
the lower  bound in (\ref{final-bound}) follows immediately.
The purpose of Theorem \ref{Thm1} is to prove similar lower bounds  for {\it{all}} ideals $\mathfrak{a} \lhd A$.

The dependence of the lower bound  in (\ref{final-bound}) on $M$ (which also includes dependence on $r$)
 and on $K$ is an important topic related to the uniform boundedness of the torsion of $M$;
 while we do not address it in the present paper, we refer the reader to \cite{Br10} and \cite{Jo19} 
 for related discussions and for additional references.

The fields $J_\mathfrak{a}$ play a prominent role in a multitude of problems, such as
in deriving 
non-trivial upper bounds for the number of non-isomorphic Frobenius fields 
associated to an elliptic curve and, respectively, to a  Drinfeld module
(see \cite{CoDa08ec} and \cite{CoDa08dm});
in investigating the  discriminants of the endomorphism rings  of the reduction of an elliptic curve and, respectively,  of a Drinfeld module
(see \cite{CoFi19} and \cite{CoPa20});
and 
in proving non-abelian reciprocity laws for primes and, respectively, for irreducible polynomials 
(see \cite{DuTo02}, \cite{CoPa15}, and \cite{GaPa19}).
For such applications, an essential piece of information is the growth of the degree $[J_{\mathfrak{a}} : K]$
as a function of the norm $|\mathfrak{a}|$.
For example, Corollary \ref{Cor2} is a key ingredient in proving that, 
for any elliptic curve $E/\Q$ with $\End_{\overline{\Q}}(E) \simeq \Z$, provided the Generalized Riemann Hypothesis holds for the division fields  of $E$,  
there exists a set of primes $p$ of natural density 1 with  the property that the absolute discriminant of the imaginary quadratic order $\End_{\F_p}(E)$ is as close as possible to its natural upper bound; 
see \cite[Theorem 1]{CoFi19}.
Similarly, Corollary \ref{Cor3} is a key ingredient in proving that, 
denoting by $\F_q$ the finite field with $q$ elements and assuming that $q$ is odd,
for any generic Drinfeld module $\psi: \F_q[T] \longrightarrow \F_q(T)\{\tau\}$ of rank 2 and with 
$\End_{\overline{\F_q(T)}}(\psi) \simeq \F_q[T]$, 
there exists a set of prime ideals $\mathfrak{p} \lhd \F_q[T]$ of Dirichlet density 1 with  the property that the norm of  the discriminant of the imaginary quadratic order $\End_{\F_{\mathfrak{p}}}(\psi)$ is as close as possible to its natural upper bound; 
see \cite[Theorem 6]{CoPa20}. We expect that Theorem \ref{Thm1} will be of use to other arithmetic studies of elliptic modules.

{\bf{Notation}}. 
In what follows, we use the standard $\ll$, $\gg$, and $\asymp$ notation:
given suitably defined real functions $h_1, h_2$,
we say that
$h_1 \ll h_2$ or $h_2 \gg h_1$
if 
$h_2$ is positive valued 
and
 there exists a positive constant $C$ such that 
$|h_1(x)| \leq C h_2(x)$ for all $x$ in the domain of $h_1$;
we say that
$h_1 \asymp h_2$ 
if 
$h_1$, $h_2$ are positive valued 
and 
$h_1 \ll h_2 \ll h_1$;
we say that
$h_1 \ll_D h_2$ or $h_2 \gg_D h_1$
if 
$h_1 \ll h_2$
and
the implied $\ll$-constant $C$ depends on priorly given data  $D$;
similarly,
we say that 
$h_1 \asymp_D h_2$ 
if the implied constant in either one of the $\ll$-bounds
$h_1 \ll h_2 \ll h_1$
depends on priorly  given data $D$.
We also use the standard divisibility notation for ideals in a Dedekind domain. In particular, 
given two ideals $\mathfrak{a}$, $\mathfrak{b}$, 
we write $\mathfrak{a} \mid \mathfrak{b}^{\infty}$ if 
all the prime ideal factors of $\mathfrak{a}$ are among the prime ideal factors of $\mathfrak{b}$
(with possibly different exponents).
Further notation will be introduced over the course of the paper as needed.

\section{Goursat's Lemma and variations}

In this section we recall Goursat's Lemma on fibered products of groups (whose definition we also recall shortly) 
and detail the behavior of such fibered products under intersection.

\begin{lemma} \label{goursat} (Goursat's Lemma)

Let $G_1$, $G_2$ be groups 
and 
for $i \in \{1, 2 \}$ denote by $\pi_i : G_1 \times G_2 \longrightarrow G_i$ the projection map onto the $i$-th factor. 
Let $G \leq G_1 \times G_2$ be a subgroup and 
assume that  $\pi_1(G) = G_1$, $\pi_2(G) = G_2$.
Then there exist a group $\Gamma$ and a pair of surjective group homomorphisms 
$\psi_1 : G_1 \longrightarrow \Gamma$, 
$\psi_2 : G_2 \longrightarrow \Gamma$
such that 
$$G = G_1 \times_\psi G_2 := \{ (g_1,g_2) \in G_1 \times G_2 : \psi_1(g_1) = \psi_2(g_2) \}.$$
\end{lemma}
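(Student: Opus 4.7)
The plan is to construct $\Gamma$ explicitly as a quotient of $G_1$ (equivalently of $G_2$) by a natural normal subgroup coming from the ``kernel'' of the fiber product structure, and then define the two surjections $\psi_1$, $\psi_2$ in such a way that the stated equality holds tautologically in one direction and falls out of a translation argument in the other.

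Concretely, I would first define
\[
N_1 := \{g_1 \in G_1 : (g_1, e_{G_2}) \in G\}, \qquad N_2 := \{g_2 \in G_2 : (e_{G_1}, g_2) \in G\},
\]
and verify that $N_1 \trianglelefteq G_1$ and $N_2 \trianglelefteq G_2$. Normality of $N_1$ uses the surjectivity of $\pi_2$: given any $g_1 \in G_1$, pick $h_2 \in G_2$ with $(g_1, h_2) \in G$, and conjugate $(n_1, e_{G_2})$ by $(g_1, h_2)$ inside $G$ to obtain $(g_1 n_1 g_1^{-1}, e_{G_2}) \in G$. Normality of $N_2$ is symmetric.

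Next I would set $\Gamma := G_1/N_1$, let $\psi_1 : G_1 \twoheadrightarrow \Gamma$ be the canonical quotient map, and define $\psi_2 : G_2 \to \Gamma$ as follows. Given $g_2 \in G_2$, use surjectivity of $\pi_2|_G$ to pick $g_1 \in G_1$ with $(g_1, g_2) \in G$, and set $\psi_2(g_2) := g_1 N_1$. The main technical point, which I expect to be the only real step requiring care, is well-definedness of $\psi_2$: if $(g_1, g_2), (g_1', g_2) \in G$, then $(g_1', g_2)^{-1}(g_1, g_2) = (g_1'^{-1} g_1, e_{G_2}) \in G$, so $g_1'^{-1} g_1 \in N_1$, whence $g_1 N_1 = g_1' N_1$. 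Surjectivity of $\psi_2$ follows from that of $\psi_1$ together with surjectivity of $\pi_1|_G$, and it is routine that $\psi_2$ is a homomorphism.

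Finally I would check the set-theoretic equality $G = G_1 \times_\psi G_2$. The inclusion $G \subseteq G_1 \times_\psi G_2$ is immediate from the definition of $\psi_2$. For the reverse inclusion, given $(g_1, g_2) \in G_1 \times G_2$ with $\psi_1(g_1) = \psi_2(g_2)$, choose $g_1' \in G_1$ with $(g_1', g_2) \in G$ and $\psi_2(g_2) = g_1' N_1$; then $g_1 g_1'^{-1} \in N_1$, so $(g_1 g_1'^{-1}, e_{G_2}) \in G$, and multiplying gives $(g_1, g_2) = (g_1 g_1'^{-1}, e_{G_2})(g_1', g_2) \in G$. This completes the construction and the verification. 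The argument is purely group-theoretic and does not interact with any of the module-theoretic setup of the paper.
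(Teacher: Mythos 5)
Your proof is correct and is the standard argument for Goursat's Lemma; the paper does not reproduce a proof but simply cites Ribet (\cite[Lemma 5.2.1]{Ri76}), and what you have written is essentially that classical construction of $\Gamma$ as $G_1/N_1$ with $N_i$ the ``slice'' kernels. One small slip worth flagging: in the normality argument for $N_1$ you invoke ``surjectivity of $\pi_2$,'' but what you actually use when choosing $h_2 \in G_2$ with $(g_1, h_2) \in G$ for an arbitrary $g_1 \in G_1$ is the hypothesis $\pi_1(G) = G_1$ (surjectivity of $\pi_1|_G$). The rest --- well-definedness and surjectivity of $\psi_2$, the homomorphism property, and the two inclusions giving $G = G_1 \times_\psi G_2$ --- is all correct and cleanly presented.
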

\begin{proof}
See \cite[Lemma 5.2.1]{Ri76}.
\end{proof}

We call $G_1 \times_\psi G_2$ the \emph{fibered product of $G_1$ and $G_2$ over $\psi := (\psi_1, \psi_2)$}.  
The next lemma details what happens when we intersect such a fibered product with a subgroup of the form $H_1 \times H_2$ defined by subgroups $H_1 \leq G_1$ and $H_2 \leq G_2$. 

 It is clear that
$$
\left( H_1 \times H_2 \right) \cap \left( G_1 \times_\psi G_2 \right) = H_1 \times_{\psi} H_2 := \{ (h_1, h_2) \in H_1 \times H_2 : \psi_1(h_1) = \psi_2(h_2) \}.
$$
However, this representation does not specify the restricted common quotient inside $\Gamma$.
In particular, it can be the case that the fibered product $H := H_1 \times_{\psi} H_2$ 
does \emph{not} satisfy $\pi_i(H) = H_i$ for each $i \in \{1, 2\}$.  The following lemma clarifies this situation.
\begin{lemma} \label{goursat-variation}
Let $G_1$, $G_2$ be groups,
 let $\psi_1 : G_1 \rightarrow \Gamma$, $\psi_2 : G_2 \rightarrow \Gamma$ be surjective group homomorphisms onto a group $\Gamma$, 
and 
let $G_1 \times_\psi G_2$ be the associated fibered product.  
Furthermore, let $H_1 \leq G_1$, $H_2 \leq G_2$ be subgroups.  
Define the subgroup
$$\Gamma_H := \psi_1(H_1) \cap \psi_2(H_2) \leq \Gamma.$$
Then
\begin{equation} \label{fiberedintersectionequality}
\left( H_1 \times H_2 \right) \cap \left( G_1 \times_\psi G_2 \right) 
=
\left( H_1 \cap \psi_1^{-1}(\Gamma_H) \right) \times_{\psi} \left( H_2 \cap \psi_2^{-1}(\Gamma_H) \right)
\end{equation}
and the canonical projection maps
\[
\begin{split}
\pi_1 : &\left( H_1 \cap \psi_1^{-1}(\Gamma_H) \right) \times_\psi \left( H_2 \cap \psi_2^{-1}(\Gamma_H) \right) \longrightarrow  H_1 \cap \psi_1^{-1}(\Gamma_H), \\
\pi_2 : &\left( H_1 \cap \psi_1^{-1}(\Gamma_H) \right) \times_\psi \left( H_2 \cap \psi_2^{-1}(\Gamma_H) \right) \longrightarrow  H_2 \cap \psi_2^{-1}(\Gamma_H)
\end{split}
\]
are surjective.
\end{lemma}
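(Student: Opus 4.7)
The argument is essentially a definition-chase; I would separate it into two parts corresponding to the two assertions of the lemma.

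For the set equality \eqref{fiberedintersectionequality}, the inclusion $\supseteq$ is immediate, since by construction the right-hand side is contained in $H_1 \times H_2$ and consists of pairs $(h_1,h_2)$ with $\psi_1(h_1)=\psi_2(h_2)$, hence already sits inside $G_1 \times_\psi G_2$. For the nontrivial inclusion $\subseteq$, the key observation is that any $(h_1,h_2)$ on the left-hand side satisfies $\psi_1(h_1)=\psi_2(h_2)$, and this common value lies simultaneously in $\psi_1(H_1)$ and $\psi_2(H_2)$, hence in $\Gamma_H$ by its very definition. This forces $h_1 \in H_1 \cap \psi_1^{-1}(\Gamma_H)$ and $h_2 \in H_2 \cap \psi_2^{-1}(\Gamma_H)$, which is precisely the refinement needed to land in the right-hand side.

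For the surjectivity of $\pi_1$, given an arbitrary $h_1 \in H_1 \cap \psi_1^{-1}(\Gamma_H)$, I would exploit that $\psi_1(h_1) \in \Gamma_H \subseteq \psi_2(H_2)$ to produce some $h_2 \in H_2$ with $\psi_2(h_2)=\psi_1(h_1)$. Since $\psi_2(h_2)=\psi_1(h_1) \in \Gamma_H$, automatically $h_2 \in H_2 \cap \psi_2^{-1}(\Gamma_H)$, so the pair $(h_1,h_2)$ lies in the refined fibered product on the right-hand side of \eqref{fiberedintersectionequality} and projects to $h_1$ under $\pi_1$. The surjectivity of $\pi_2$ follows by an entirely symmetric argument.

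There is no serious obstacle here; the content of the lemma is to make explicit the passage from the naive description of the intersection (as a fibered product of $H_1$ and $H_2$ over $\Gamma$ itself) to one whose canonical projections are actually onto. The subtle point is that $\psi_1(H_1)$ and $\psi_2(H_2)$ may be proper subgroups of $\Gamma$, so the "true" common quotient of the restricted system is $\Gamma_H$, and only after further restricting each $H_i$ to $H_i \cap \psi_i^{-1}(\Gamma_H)$ can one guarantee that each element of one factor is matched by an element of the other.
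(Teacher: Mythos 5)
Your proof is correct and follows the same definition-chase as the paper: both directions of the set equality are handled by observing that the common value $\psi_1(h_1)=\psi_2(h_2)$ lands in $\Gamma_H$, and surjectivity of each projection uses $\Gamma_H \subseteq \psi_2(H_2)$ (resp.\ $\psi_1(H_1)$) to lift an element of one factor to a compatible pair. Nothing to add.
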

\begin{proof}
We first establish \eqref{fiberedintersectionequality}.  
Since the containment ``$\supseteq$'' is immediate, we only need to establish ``$\subseteq$.''  Let $(h_1,h_2) \in \left( H_1 \times H_2 \right) \cap \left( G_1 \times_\psi G_2 \right)$, i.e. $h_1 \in H_1$, $h_2 \in H_2$, and $\psi_1(h_1) = \psi_2(h_2)$.  From the definition of $\Gamma_H$, it follows that $\psi_1(h_1) = \psi_2(h_2) \in \Gamma_H$.
Thus $(h_1,h_2) \in \left( H_1 \cap \psi_1^{-1}(\Gamma_H) \right) \times_{\psi} \left( H_2 \cap \psi_2^{-1}(\Gamma_H) \right)$, establishing \eqref{fiberedintersectionequality}.

To see why the projection map 
\begin{equation} \label{surjectivityofpi1}
\pi_1 : H_1 \cap \psi_i^{-1}(\Gamma_H) \longrightarrow H_1 \cap \psi_1^{-1}(\Gamma_H)
\end{equation}
is surjective, fix $h_1 \in H_1 \cap \psi_1^{-1}(\Gamma_H)$ and set $\gamma := \psi_1(h_1) \in \Gamma_H$.  By the definition of $\Gamma_H$, we find $h_2 \in H_2$ with $\psi_2(h_2) = \gamma$.
Thus  $(h_1,h_2) \in \left( H_1 \cap \psi_1^{-1}(\Gamma_H) \right) \times_\psi \left( H_2 \cap \psi_2^{-1}(\Gamma_H) \right)$ and $\pi_1(h_1,h_2) = h_1$, proving the surjectivity of $\pi_1$ in \eqref{surjectivityofpi1}.  
The surjectivity of $\pi_2$ is proved similarly.
\end{proof}

\section{Proof of Theorem \ref{Thm1}}

In this section we prove Theorem \ref{Thm1}.
We will make use of the following notation:
$$G := \rho(G_K) \leq \GL_r(\hat{A});$$
for any ideal $0 \neq \mathfrak{a} \lhd A$,
we write
$$G(\mathfrak{a}) := \rho_{\mathfrak{a}}(G_K)
 \leq
 \GL_r(A/\mathfrak{a});$$
for any subgroup $H \leq \GL_r(A/\mathfrak{a})$, we write
$$\Scal_H := H \cap \{\alpha I_r: \alpha \in (A/\mathfrak{a})^{\times} \}.$$
With this notation,  we see that
$J_{\mathfrak{a}} = K(E[\mathfrak{a}])^{\Scal_{G(\mathfrak{a})}}$.

To prove the theorem,
let $0 \neq \mathfrak{a} \lhd A$ be a fixed arbitrary ideal. 
The proof of the upper bound  is an immediate consequence to the
injection
$\Gal(J_{\mathfrak{a}}/K) \hookrightarrow \PGL_r(A/\mathfrak{a})$
defined  by $\hat{\rho}_{\mathfrak{a}}$.
Indeed,
using that
$$
\left|\PGL_r(A/\mathfrak{a})\right| 
= 
\frac{1}{\left|(A/\mathfrak{a})^{\times}\right|} \left|\GL_r(A/\mathfrak{a})\right|,
$$
$$
|(A/\mathfrak{a})^{\times}| 
= 
|\mathfrak{a}| \ds\prod_{\mathfrak{p} \mid \mathfrak{a}} \left(1 - \frac{1}{|\mathfrak{p}|}\right),
$$
and
$$
\left|\GL_r(A/\mathfrak{a})\right|
=
|\mathfrak{a}|^{r^2 }
\ds\prod_{
\mathfrak{p} \mid \mathfrak{a}
\atop{\mathfrak{p} \ \text{prime}}
}
\left(1 - \frac{1}{\left|\mathfrak{p}\right|}\right)
\left(1 - \frac{1}{\left|\mathfrak{p}\right|^2}\right)
\ldots
\left(1 - \frac{1}{\left|\mathfrak{p}\right|^r}\right)
$$
(see \cite[Lemma 2.3, p. 1244]{Br10} for the latter),
we obtain that
\begin{eqnarray*}
[J_{\mathfrak{a}} : K]
\leq
|\PGL_r(A/\mathfrak{a})|
=
|\mathfrak{a}|^{r^2 - 1}
\ds\prod_{
\mathfrak{p} \mid \mathfrak{a}
\atop{\mathfrak{p} \ \text{prime}}
}
\left(1 - \frac{1}{\left|\mathfrak{p}\right|^2}\right)
\ldots
\left(1 - \frac{1}{\left|\mathfrak{p}\right|^r}\right)
\leq
|\mathfrak{a}|^{r^2 - 1}.
\end{eqnarray*}
The proof of the lower bound  relies on  several consequences to assumption (\ref{assumption}),
as well as on applications of Goursat's Lemma \ref{goursat} and its variation Lemma \ref{goursat-variation},  as detailed below.

Thanks to (\ref{assumption}), there exists an ideal $\mathfrak{m} = \mathfrak{m}_{M, K} \unlhd A$ such that
\begin{equation}\label{def-m}
G = \pi^{-1}(G(\mathfrak{m})),
\end{equation}
where
$\pi: \GL_r(\hat{A}) \longrightarrow \GL_r(A/\mathfrak{m})$ is the canonical projection.
We take $\mathfrak{m}$ to be the smallest such ideal with respect to divisibility
and we write its unique prime ideal factorization as
$\mathfrak{m} 
= 
\ds\prod_{\mathfrak{p}^{v_{\mathfrak{p}}(\mathfrak{m}) } || \mathfrak{m}} 
\mathfrak{p}^{v_{\mathfrak{p}}(\mathfrak{m}) },$
where each exponent satisfies $v_{\mathfrak{p}}(\mathfrak{m}) \geq 1$.

With the ideal $\mathfrak{m}$ in mind, 
we write the arbitrary ideal $\mathfrak{a}$ uniquely as
\begin{equation}\label{factor-a}
\mathfrak{a} = \mathfrak{a}_1 \mathfrak{a}_2,
\end{equation}
where
\begin{equation}\label{def-a1}
\mathfrak{a}_1 \mid \mathfrak{m}^{\infty},
\end{equation}
\begin{equation}\label{def-a2}
\gcd(\mathfrak{a}_2, \mathfrak{m}) = 1.
\end{equation}
For future use, we record that
\begin{equation}\label{gcd-a1-a2}
\gcd(\mathfrak{a}_1, \mathfrak{a}_2) =  1.
\end{equation}
We  also write the ideal $\mathfrak{a}_1$ uniquely as
\begin{equation*}
\mathfrak{a}_1 = \mathfrak{a}_{1, 1} \ \mathfrak{a}_{1, 2},
\end{equation*}
where
$\mathfrak{a}_{1, 1} = 
\ds\prod_{
\mathfrak{p}^{e_{\mathfrak{p}}} || {\mathfrak{a}}_{1, 1}
\atop{
e_{\mathfrak{p}} > v_{\mathfrak{p}}(\mathfrak{m})
}
} 
\mathfrak{p}^{e_{\mathfrak{p}}}$
and
$\mathfrak{a}_{1, 2} = 
\ds\prod_{
\mathfrak{p}^{f_{\mathfrak{p}}} || {\mathfrak{a}}_{1, 1}
\atop{
f_{\mathfrak{p}} \leq v_{\mathfrak{p}}(\mathfrak{m})
}
} 
\mathfrak{p}^{f_{\mathfrak{p}}}$.
Note that
\begin{equation}\label{gcd-a11-a12}
\gcd(\mathfrak{a}_{1, 1}, \mathfrak{a}_{1, 2}) = 1,
\end{equation}
\begin{equation}\label{def-a11}
\mathfrak{a}_{1, 1} \mid \mathfrak{m}^\infty
\end{equation}
and
\begin{equation}\label{def-a12}
\mathfrak{a}_{1, 2} \mid \mathfrak{m}.
\end{equation}

Under the isomorphism of the Chinese Remainder Theorem, 
we deduce from  (\ref{def-m}) that
\begin{equation}\label{G-G1-G2}
G(\mathfrak{a}) \simeq G(\mathfrak{a}_1) \times \GL_r(A/\mathfrak{a}_2)
\end{equation}
and, consequently,  that there exist group isomorphisms
\begin{equation}\label{Scal1-Scal2}
\Scal_{G(\mathfrak{a})} 
\simeq 
{\Scal_{G(\mathfrak{a}_1)} \times {\Scal_{\GL_r(A/\mathfrak{a}_2)}}},
\end{equation}
\begin{equation}\label{Ga/Scala}
G(\mathfrak{a})/\Scal_{G(\mathfrak{a})} 
\simeq 
\left(G(\mathfrak{a}_1)/\Scal_{G(\mathfrak{a}_1)}\right) \times \PGL_r(A/\mathfrak{a}_2).
\end{equation}

Next, applying Lemma \ref{goursat} to the groups 
$G = G(\mathfrak{a}_1)$,
$G_1 = G(\mathfrak{a}_{1, 1})$, and $G_2 = G(\mathfrak{a}_{1, 2})$, 
we deduce that
there exist 
a group $\Gamma$ 
and 
surjective group homomorphisms
$\psi_1: G(\mathfrak{a}_{1, 1}) \longrightarrow \Gamma$, 
$\psi_2: G(\mathfrak{a}_{1, 2}) \longrightarrow \Gamma$,
which give rise to a group isomorphism
\begin{equation}\label{G-a1}
G(\mathfrak{a}_1) \simeq G(\mathfrak{a}_{1, 1}) \times_{\psi} G(\mathfrak{a}_{1, 2}).
\end{equation}
Furthermore, applying Lemma \ref{goursat-variation} to the subgroups
$H_1 = \Scal_{G(\mathfrak{a}_{1, 1})}$ and $H_2 = \Scal_{G(\mathfrak{a}_{1, 2})}$,
we deduce that
there exists a group isomorphism
\begin{equation}\label{scal-products}
\left(
\Scal_{G(\mathfrak{a}_{1, 1})} \times \Scal_{G(\mathfrak{a}_{1, 2})}
\right)
\cap
\left(
G(\mathfrak{a}_{1, 1}) \times_{\psi} G(\mathfrak{a}_{1, 2})
\right)
\simeq
\left(
\Scal_{G(\mathfrak{a}_{1, 1})} \cap \psi_1^{-1} (\Gamma_{\Scal})
\right)
\times_{\psi}
\left(
\Scal_{G(\mathfrak{a}_{1, 2})} \cap \psi_2^{-1} (\Gamma_{\Scal})
\right),
\end{equation}
where
$$
\Gamma_{\Scal} 
:= 
\psi_1\left(\Scal_{G(\mathfrak{a}_{1, 1})}\right) \cap \psi_1\left(\Scal_{G(\mathfrak{a}_{1, 2})}\right)
\leq 
\Gamma.
$$

From (\ref{Ga/Scala}) we derive that
\begin{equation}\label{Ja-degree-first}
[J_{\mathfrak{a}} :K] 
= 
\left|G(\mathfrak{a})/\Scal_{G(\mathfrak{a})}\right| 
= 
\left|G(\mathfrak{a}_1)/\Scal_{G(\mathfrak{a}_1)}\right| \cdot \left|\PGL_r(A/\mathfrak{a}_2)\right|.
\end{equation}
Then, using  
(\ref{G-a1}) and (\ref{scal-products}),
we derive that
\begin{eqnarray}\label{Ga1/Scala1}
\left|
G(\mathfrak{a}_1)/\Scal_{G(\mathfrak{a}_1)}
\right|
&=&
\frac{
\left|
G(\mathfrak{a}_{1, 1}) \times_{\psi} G(\mathfrak{a}_{1, 2})
\right|
}
{
\left|
\left(
\Scal_{G(\mathfrak{a}_{1, 1})} \times \Scal_{G(\mathfrak{a}_{1, 2})}
\right)
\cap
G(\mathfrak{a}_1)
\right|
}
\nonumber
\\
&=&
\frac{
\left|
G(\mathfrak{a}_{1, 1}) \times_{\psi} G(\mathfrak{a}_{1, 2})
\right|
}{
\left|
\left(
\Scal_{G(\mathfrak{a}_{1, 1})} \cap \psi_1^{-1}(\Gamma_{\Scal})
\right)
\times_{\psi}
\left(
\Scal_{G(\mathfrak{a}_{1, 2})} \cap \psi_2^{-1}(\Gamma_{\Scal})
\right)
\right|
}
\nonumber
\\
&=&
\frac{
\left|G(\mathfrak{a}_{1, 1})\right|
}{
\left|
\Scal_{G(\mathfrak{a}_{1, 1})} \cap \psi_1^{-1}(\Gamma_{\Scal})
\right|
}
\cdot
\frac{
\left|\Gamma_{\Scal}\right|
}{
|\Gamma|
}
\cdot
\frac{
\left|
G(\mathfrak{a}_{1, 2})
\right|
}{
\left|
\Scal_{G(\mathfrak{a}_{1, 2})} \cap \psi_2^{-1}(\Gamma_{\Scal})
\right|
}
\nonumber
\\
&=&
\frac{
\left|G(\mathfrak{a}_{1, 1})\right|
}{
\left|
\Scal_{G(\mathfrak{a}_{1, 1})} \cap \psi_1^{-1}(\Gamma_{\Scal})
\right|
}
\cdot
\frac{
\left|
\psi_1\left(\Scal_{G(\mathfrak{a}_{1, 1})}\right)
\cap
\psi_2\left(\Scal_{G(\mathfrak{a}_{1, 2})}\right)
\right|
}{
|\Gamma|
}
\cdot
\frac{
\left|
G(\mathfrak{a}_{1, 2})
\right|
}{
\left|
\Scal_{G(\mathfrak{a}_{1, 2})} \cap \psi_2^{-1}(\Gamma_{\Scal})
\right|
}.
\end{eqnarray}

Recalling (\ref{def-a12}), we deduce that the last two factors above are bounded, from above and below,
by constants depending on $\mathfrak{m}$, hence on $M$ and $K$:
\begin{equation}\label{bound-a12-part}
\frac{
\left|
\psi_1\left(\Scal_{G(\mathfrak{a}_{1, 1})}\right)
\cap
\psi_2\left(\Scal_{G(\mathfrak{a}_{1, 2})}\right)
\right|
}{
|\Gamma|
}
\cdot
\frac{
\left|
G(\mathfrak{a}_{1, 2})
\right|
}{
\left|
\Scal_{G(\mathfrak{a}_{1, 2})} \cap \psi_2^{-1}(\Gamma_{\Scal})
\right|
}
\asymp_{M, K}
1.
\end{equation}

It remains to analyze the first factor in (\ref{Ga1/Scala1}).
For this, consider the canonical projection
$$
\pi_{1, 1}: \GL_r(A/\mathfrak{a}_{1, 1}) \longrightarrow \GL_r(A/\gcd(\mathfrak{a}_{1, 1}, \mathfrak{m}))
$$
and, upon recalling \eqref{def-m},  observe that
\begin{equation}\label{Ga11}
G(\mathfrak{a}_{1, 1}) = \pi_{1, 1}^{-1}(G(\gcd(\mathfrak{a}_{1, 1}, \mathfrak{m})))
\end{equation}
and
\begin{equation}\label{Ker-pi11}
\Ker \pi_{1, 1} \subseteq \Ker \psi_1.
\end{equation}
Thus the subgroups 
$G(\mathfrak{a}_{1, 1}) \leq \GL_r(A/\mathfrak{a}_{1, 1})$ 
and
$G(\gcd(\mathfrak{a}_{1, 1}, \mathfrak{m})) \leq \GL_r(A/\gcd(\mathfrak{a}_{1,1}, \mathfrak{m}))$, 
together with the group $\Gamma$,
fit into a commutative diagram
$$
\xymatrix{
G(\mathfrak{a}_{1, 1})  
\; \; \; 
\ar@{->>}[rd]_{\psi_1}   
\ar@{->>}[r]^{\hspace*{-0.7cm} \pi_{1,1}}   
& \; \; G(\gcd(\mathfrak{a}_{1, 1}, \mathfrak{m})) 
\ar@{-->>}[d]^{\rho}
\\
                                              & \Gamma          
}
$$
in which 
the vertical map $\rho$ is some surjective group homomorphism 
and 
the horizontal map $\left.\pi_{1,1}\right|_{G(\mathfrak{a}_{1, 1})}$ is
$\left(\frac{|\mathfrak{a}_{1, 1}|}{|\gcd(\mathfrak{a}_{1,1}, \mathfrak{m})|}\right)^{r^2}$ to $1$.
Furthermore, 
the subgroups
$\psi_1^{-1}(\Gamma_{\Scal}) \cap \Scal_{G(\mathfrak{a}_{1,1})} \leq \Scal_{\GL_r(A/\mathfrak{a}_{1, 1})} \simeq (A/\mathfrak{a}_{1, 1})^{\times}$
and
$\rho^{-1}(\Gamma_{\Scal}) \cap \Scal_{G\left(\gcd(\mathfrak{a}_{1,1},\mathfrak{m})\right)} \leq \Scal_{\GL_r(A/\gcd(\mathfrak{a}_{1, 1}, \mathfrak{m}))} \simeq (A/\gcd(\mathfrak{a}_{1, 1}, \mathfrak{m}))^{\times}$,
together with the group $\Gamma_{\Scal}$,
fit into the commutative diagram
$$
\xymatrix{
\psi_1^{-1}(\Gamma_{\Scal}) \cap \Scal_{G(\mathfrak{a}_{1,1})}
\; \; \; 
\ar@{->>}[rd]_{\psi_1}   
\ar@{->>}[r]^{\hspace*{-0.1cm} \pi_{1,1}}   
& \; \; \rho^{-1}(\Gamma_{\Scal}) \cap \Scal_{G\left(\gcd(\mathfrak{a}_{1,1},\mathfrak{m})\right)}
\ar@{->>}[d]^{\rho}
\\
                                              & \Gamma_{\Scal}        
}
$$
in which 
the horizontal map $\left.\pi_{1,1}\right|_{\psi_1^{-1}(\Gamma_{\Scal})\cap \Scal_{G(\mathfrak{a}_{1,1})}}$ is
$\frac{|\mathfrak{a}_{1, 1}|}{|\gcd(\mathfrak{a}_{1,1}, \mathfrak{m})|}$ to $1$.
We deduce that
\begin{equation}\label{Ga11-order}
\left|G(\mathfrak{a}_{1, 1})\right|
=
\left(\frac{|\mathfrak{a}_{1, 1}|}{|\gcd(\mathfrak{a}_{1,1}, \mathfrak{m})|}\right)^{r^2}
\left|
G(\gcd(\mathfrak{a}_{1, 1}, \mathfrak{m}))
\right|  
\asymp_{M, K}
|\mathfrak{a}_{1, 1}|^{r^2}
\end{equation}
and
\begin{equation}\label{Scala11-order}
\left| \psi_1^{-1}(\Gamma_{\Scal}) \cap \Scal_{G(\mathfrak{a}_{1, 1})} \right|
=
\frac{|\mathfrak{a}_{1, 1}|}{|\gcd(\mathfrak{a}_{1,1}, \mathfrak{m})|}
\left| \rho^{-1}(\Gamma_{\Scal}) \cap \Scal_{G(\gcd(\mathfrak{a}_{1, 1}, \mathfrak{m}))} \right|
\asymp_{M, K}
|\mathfrak{a}_{1,1}|.
\end{equation}

Putting together 
(\ref{Ja-degree-first}),
(\ref{Ga1/Scala1}),
(\ref{bound-a12-part}),
(\ref{Ga11-order}),
and
(\ref{Scala11-order}),
we obtain that
\begin{equation}\label{Ja-degree-second}
[J_{\mathfrak{a}} :K] 
\asymp_K
|\mathfrak{a}_{1, 1}|^{r^2 - 1}  \left|\PGL_r(A/\mathfrak{a}_2)\right|.
\end{equation}

To conclude the proof, observe that
\begin{eqnarray*}\label{PGL-a2}
 \left|\PGL_r(A/\mathfrak{a}_2)\right|
 &=&
|\mathfrak{a}_2|^{r^2 - 1}
\ds\prod_{
\mathfrak{p} \mid \mathfrak{a}_2
\atop{\mathfrak{p} \ \text{prime}}
}
\left(1 - \frac{1}{\left|\mathfrak{p}\right|^2}\right)
\ldots
\left(1 - \frac{1}{\left|\mathfrak{p}\right|^r}\right)
\\
&\geq&
|\mathfrak{a}_2|^{r^2 - 1}
\ds\prod_{
\mathfrak{p}
\atop{\mathfrak{p} \ \text{prime}}
}
\left(1 - \frac{1}{\left|\mathfrak{p}\right|^2}\right)
\ldots
\left(1 - \frac{1}{\left|\mathfrak{p}\right|^r}\right)
\\
&\gg_{r, K}&
|\mathfrak{a}_2|^{r^2 - 1},
\end{eqnarray*}
which, combined with (\ref{Ja-degree-second}), (\ref{factor-a}) and (\ref{def-a12}),
gives
\begin{eqnarray*}
[J_{\mathfrak{a}} :K] 
\asymp_{K}
\frac{|\mathfrak{a}_{1}|^{r^2 - 1}}{|\mathfrak{a}_{1, 2}|^{r^2 - 1}}  
\left|\PGL_r(A/\mathfrak{a}_2)\right|
\gg_{r, K}
\frac{|\mathfrak{a}_{1} \mathfrak{a}_2|^{r^2 - 1}}{|\mathfrak{a}_{1, 2}|^{r^2 - 1}}  
\gg_{M, K}
|\mathfrak{a}|^{r^2 -1}.
\end{eqnarray*}

\section{Proof of Corollaries \ref{Cor2} and \ref{Cor3}}

First consider the setting of Corollary \ref{Cor2}: 
 $K$  a finitely generated field with $\car K = 0$
and 
$E/K$ an elliptic curve over $K$ with $\End_{\overline{K}}(E) \simeq \Z$.
This is the specialization to the setting of Theorem \ref{Thm1} to
$k = \Q$, $A = \Z$, $K$ as above, and $M = E$. 
In this case, $r = 2$
and assumption (\ref{assumption}) holds thanks to an extension of Serre's Open Image Theorem for elliptic curves
over number fields \cite[Th\'{e}or\`{e}me 3, p. 299]{Se72} as explained in \cite[Theorem 3.2, p. 1248]{Br10}.
Corollary \ref{Cor2} follows.

Next consider the setting of Corollary \ref{Cor3}:
$k$ a global function field, 
$\infty$ a fixed place of $k$,
$A$ the ring of elements of $k$ regular away from $\infty$,
$K$ a finitely generated $A$-field with $\car K = \car k$ and $A$-$\car K = 0$,
and 
$\psi: A \longrightarrow K\{\tau\}$ a Drinfeld $A$-module over $K$ of rank $r \geq 2$
with $\End_{\overline{K}}(\psi) \simeq A$.
This is the specialization to the setting of Theorem \ref{Thm1} to
$k$, $A$, $K$ as above, and $M = \psi$. 
In this case, assumption (\ref{assumption}) holds thanks to Pink-R\"{u}tshe's Open Image Theorem for Drinfeld modules \cite[Theorem 0.1, p. 883]{PiRu09}.
Corollary \ref{Cor3} follows.


{\small{

}}

\end{document}